\newcommand{\del}{\partial }
\newtheorem{thm}{Theorem}
\newtheorem{prop}[thm]{Proposition}
\newtheorem{corr}[thm]{Corollary}
\theoremstyle{definition}
\newtheorem{ex}[thm]{Example}
\newtheorem{defn}[thm]{Definition}
\theoremstyle{remark}
\newtheorem{rem}[thm]{Remark}
\begin{document}

\title{Trisections of 3-Manifolds}
\author{Dale Koenig}
\date{}

\begin{abstract}
We define a trisection of a closed, orientable three dimensional manifold into three handlebodies, and a notion of stabilization for these trisections.  Several examples of trisections are described in detail.  We define the trisection genus $t(M)$ of a 3-manifold, and relate it to the Heegaard genus $g(M)$, showing that $t(M) \le g(M) \le 2t(M)$.  We show moreover that the bound $g(M) \le 2t(M)$ is tight. We define stabilizations of trisections and show that all trisections of a 3-manifold are stably equivalent, providing an analogue of the Reidemeister-Singer theorem for trisections.  We conclude by showing that there exist complicated trisections of $S^3$.
\end{abstract}

\maketitle

\section{Introduction}

A Heegaard splitting of a closed, orientable 3-manifold can be thought of as a ``bisection'' of the 3-manifold into two handlebodies.  Gay and Kirby \cite{GayKirby1} introduced trisections of smooth, orientable 4-manifolds to create an analogous construction in the higher dimension, defining the trisection genus of a 4-manifold and proving that all trisections of a 4-manifold are stably equivalent.  In this paper we consider these ideas back in the third dimension.  Decompositions of non-orientable 3-manifolds into three orientable handlebodies have been analyzed by Gomez-Larra{\~n}aga, Heil, and N{\'u}{\~n}ez\cite{gomezheilnunez1}\cite{gomezheilnunez2} who also defined the notion of the trigenus of a nonorientable 3-manifold.  Gomez-Larra{\~n}aga also investigated which orientable 3-manifolds can be decomposed into three tori \cite{gomez1}.  Coffey and Rubinstein have looked at orientable 3-manifolds formed by gluing three handlebodies in a sufficiently complicated way \cite{CoffeyRubinstein1}.  In this paper we will look at decompositions of orientable three manifolds into three handlebodies with connected pairwise intersections.  This condition allows us to draw connections with the field of Heegaard splittings. 

We will firsta trisection of a closed, orientable 3-manifolds and a notion of stabilization on these trisections.  We then investigate several examples in detail, showing some surprising trisections.  We define the trisection genus of a 3-manifold, and relate it to the Heegaard genus of the manifold.  We analyze the behavior of trisection genus under connect sum, showing that if $M$ is the connect sum of two manifolds of Heegaard genus $g$, $M$ has trisection genus equal to half its Heegaard genus.  We then prove the main theorem of the paper, showing that with one trivial exception, all trisections of a closed, orientable 3-manifold $M$ can be made equivalent by stabilization.  

We begin with the definition of a trisection.  Let $M$ be a closed, orientable 3-manifold.

\begin{defn}
A $(h_1,h_2,h_3;b)$-\emph{trisection} of $M$ is a quadruple \linebreak $(H_1,H_2,H_3;B)$ such that
\begin{itemize}
\item $M = H_1 \cup H_2 \cup H_3$
\item $H_i$ is a handlebody of genus $h_i$
\item Each $S_{ij} = H_i \cap H_j$ is a compact connected surface with boundary $K$
\item $B = H_1 \cap H_2 \cap H_3$ is a $b$-component link
\end{itemize}
\end{defn}

If $h_1 = h_2 = h_3=h$, the trisection is \emph{balanced}, and we call it an $(h;b)$-trisection.  Otherwise, it is \emph{unbalanced}.  If $(H_1,H_2,H_3;B)$ is a balanced $(h;b)$-trisection, we define the \emph{genus} of the trisection to be $h$.  Note that, in contrast to trisections of 4-manifolds where the genus refers to the complexity of the triple intersection, here it refers to the genus of the handlebodies $H_1,H_2,H_3$.  The simplest trisection is the trisection of $S_3$ into three balls, with each pair of balls intersecting in a disk.  We refer to this as the \emph{trivial} trisection of $S^3$.  Many more examples of trisections will be covered in section 2.

\begin{defn} Let $i,j,k$ be the indices 1,2,3 in any order.  Suppose that $S_{jk}$ is not a disk, and let $\alpha$ be a nonseparating arc in $S_{jk}$.  Define a new trisection $(H_1',H_2',H_3';B')$ by
\begin{align*}
H_i' &= \overline{H_i \cup N(\alpha)} \\
H_j' &= \overline{H_j - H_j \cap N(\alpha)} \\
H_k' &= \overline{H_k - H_k \cap N(\alpha)} \\
B' &= H_i' \cap H_j' \cap H_k'
\end{align*}
This results in a new trisection of $M$ where $h_i$ is increased by 1, and $b$ is changed by $\pm 1$.  We call this operation a \emph{stabilization}.  This operation depends on the choice of $\alpha$, so it is not generally unique even after fixing a choice of $i,j,k$.
\end{defn}

Two trisections $(H_1,H_2,H_3;B)$ and $(H_1^*,H_2^*,H_3^*;B^*)$ are \emph{isotopic} if there is an isotopy of $M$ taking each $H_i$ to the corresponding $H_i^*$ and taking $B$ to $B^*$.  We say that one trisection of $M$ is a \emph{stabilization} of another if it can be obtained by some sequence of stabilizations, up to isotopy.  Notice that a relabelling of the handlebodies does not necessarily produce an isotopic trisection.  So, for example, $(H_1,H_2,H_3;B)$ and $(H_2,H_1,H_3;B)$ may be distinct trisections.  In some settings, the order of the handlebodies may be unimportant.  In the examples in the following section, we provide one possible order of the handlebodies, and implicitly treat all reorderings as part of the same class of examples.

 We can now state the main theorem.

\begin{restatable}{thm}{themainthm}
\label{mainthm}
Let $M$ be a closed, orientable 3-manifold with two trisections.  If $M = S^3$, assume that neither of the two trisections is the trivial trisection into three balls.  Then there exists a third trisection isotopic to a stabilization of each of the original two trisections.
\end{restatable}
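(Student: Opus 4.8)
The plan is to mimic the Reidemeister–Singer strategy for Heegaard splittings: reduce the statement about two arbitrary trisections to a statement about a common refinement, built from a triangulation or handle structure that both trisections can be stabilized to agree with. So the first move is to find a \emph{standard} or \emph{canonical} trisection associated to a spine or handle decomposition of $M$, and to show that any given trisection can be stabilized until it is isotopic to one coming from such a standard picture. Concretely, I would take a trisection $(H_1,H_2,H_3;B)$, look at the surfaces $S_{ij}$ and the link $B$, and use stabilizations along nonseparating arcs to simplify the combinatorics: each stabilization lets me absorb a handle of one $H_j$ into a neighbor $H_i$ while cutting $S_{jk}$ along an arc, so by choosing arcs carefully I can push each trisection toward a normal form subordinate to a fixed cell structure on $M$.

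Next I would establish the key \emph{common stabilization} lemma in the model case where both trisections are already subordinate to (possibly different) spines of the same handle decomposition. Here the idea is that the spine of one handlebody, say $H_1$, together with the way the $S_{ij}$ sit in $M$, determines the trisection up to isotopy; two such spines of a 3-manifold become isotopic after finitely many elementary handle slides and births/deaths, and each such elementary move should be realizable as a sequence of stabilizations (and inverse stabilizations) on the trisection side. I would therefore set up a dictionary: handle slide $\leftrightarrow$ a pair of stabilizations along complementary arcs in $S_{jk}$; creation of a canceling 1–2 handle pair $\leftrightarrow$ a stabilization followed by its partner. The special exclusion of the trivial genus-$0$ trisection of $S^3$ enters exactly here, because that trisection has every $S_{ij}$ a disk, so there are no nonseparating arcs and no stabilization is possible — it is rigid and cannot be brought into the common framework, which is why the hypothesis forbids it.

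The main theorem then follows by the standard two-step argument: given two arbitrary trisections $T$ and $T'$ of $M$, first stabilize each into normal form subordinate to a common handle decomposition of $M$ (using the simplification from the first paragraph), obtaining $T_1$ from $T$ and $T_1'$ from $T'$; then apply the common stabilization lemma to $T_1$ and $T_1'$, producing a further common stabilization $\widehat T$ that is a stabilization of both $T_1$ and $T_1'$, hence of both $T$ and $T'$. Since stabilization is transitive, $\widehat T$ is the desired third trisection isotopic to a stabilization of each original one.

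The hard part will be the common stabilization lemma, and in particular showing that \emph{every} elementary move relating two spines (handle slides, and creation/annihilation of canceling handle pairs) can be realized on trisections purely by stabilizations, without needing destabilizations that the definition does not permit. The stabilization operation is one-directional — it only increases $h_i$ and changes $b$ by $\pm 1$ — so the real content is arranging that the two normal forms differ by moves that can be matched using only forward stabilizations of both sides up to isotopy, rather than canceling moves. I expect this to require care in tracking the link $B$ and the arcs $\alpha$ chosen at each stabilization, since the change of $\pm 1$ in $b$ and the connectedness requirement on each $S_{ij}$ constrain which arcs are admissible; verifying that admissible arcs always exist to realize each elementary move, and that the resulting surfaces $S_{ij}$ stay connected, is where the bulk of the technical work will lie.
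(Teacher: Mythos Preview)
Your proposal outlines a genuinely different strategy from the paper's, but it is a plan rather than a proof, and the hardest step is left open by your own admission.

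The paper does \emph{not} build a spine or handle-decomposition normal form for trisections. Instead it reduces directly to the classical Reidemeister--Singer theorem for Heegaard splittings. The argument runs: first stabilize both trisections to be balanced with matching parameters; then stabilize $H_1$ (and $H_1^*$) along a maximal arc system in $S_{23}$ so that $H_2\cup H_3$ becomes a handlebody and $(H_1,H_2\cup H_3)$ is a genuine Heegaard splitting; then use a two-step ``fake Heegaard stabilization'' (a specific pair of trisection stabilizations near a disk in $H_1$) to realize ordinary Heegaard stabilization on $(H_1,H_2\cup H_3)$, invoking Reidemeister--Singer as a black box to make $H_1$ and $H_1^*$ isotopic; finally, stabilize $H_3$ and then $H_2$ to reduce $S_{12}$ and $S_{13}$ to disks, and appeal to the fact that Heegaard splittings of a handlebody are standard (Casson--Gordon/Waldhausen) to conclude the resulting trisections are isotopic. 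The non-triviality hypothesis is used to guarantee $h>0$ after balancing, so that at least one stabilization occurs in the step that produces the Heegaard splitting, which in turn supplies the disk needed to perform fake Heegaard stabilizations.

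Your approach would instead re-prove a Reidemeister--Singer-type statement internally to trisections via spines and elementary handle moves. This could in principle be made to work, but as written it has substantive gaps: you never define what it means for a trisection to be ``in normal form subordinate to a handle decomposition''; the dictionary you posit (handle slide $\leftrightarrow$ a pair of stabilizations along complementary arcs; birth/death $\leftrightarrow$ stabilization plus partner) is asserted without argument and is not obviously correct; and, crucially, you yourself flag that realizing every elementary spine move using only forward stabilizations (no destabilizations) is the real content and leave it unresolved. That is exactly the step that carries the weight of the theorem. The paper sidesteps all of this by exporting the hard uniqueness statement to the Heegaard world, where it is already known, and only has to check that Heegaard stabilization can be simulated by a pair of trisection stabilizations --- a local, explicit construction.
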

In the following section, we will discuss several examples of trisections, and methods to obtain interesting trisections for many classes of manifolds.  Section 3 we discuss how to get a balanced trisection from an unbalanced one.  In section 4 we will define the trisection genus of a manifold and relate the trisection genus and Heegaard genus of $M$.  Section 5 will present the proof of Theorem \ref{mainthm}.  In the final section we will prove that there is no reasonable analogue of Waldhausen's theorem for trisections.

Special thanks to Abby Thompson for many useful discussions.


\section{Examples}
We begin with a few ways to get trisections of any 3-manifold, and then present some more interesting trisections of specific classes of 3-manifolds.  The order of the handlebodies is unimportant for producing examples, so we will use whatever order is convenient, usually ordering from largest to smallest genus.

\begin{ex}
\label{trifromheeg}
Let $M$ be any closed orientable 3-manifold, and let $V \cup_\Sigma W$  be a genus $g$ Heegaard splitting of $M$.  Let $D$ be a disk in $\Sigma$.  Define $H_3$ to be a regular neighborhood of $D$, and let $H_1 = \overline{V - V \cap H_3}$, $H_2 = \overline{W - W \cap H_3}$.  Then this defines a $(g,g,0;1)$-trisection of $M$.  We can stabilize $H_3$ $g$ times to produce a balanced genus $g$ trisection of $M$.  See Figure \ref{trifromheegpic}.  The trivial $(0,0,0;1)$-trisection of $S^3$ is a special case of this construction.
\end{ex}

\begin{figure}[ht]
\centering
\includegraphics[scale=.45]{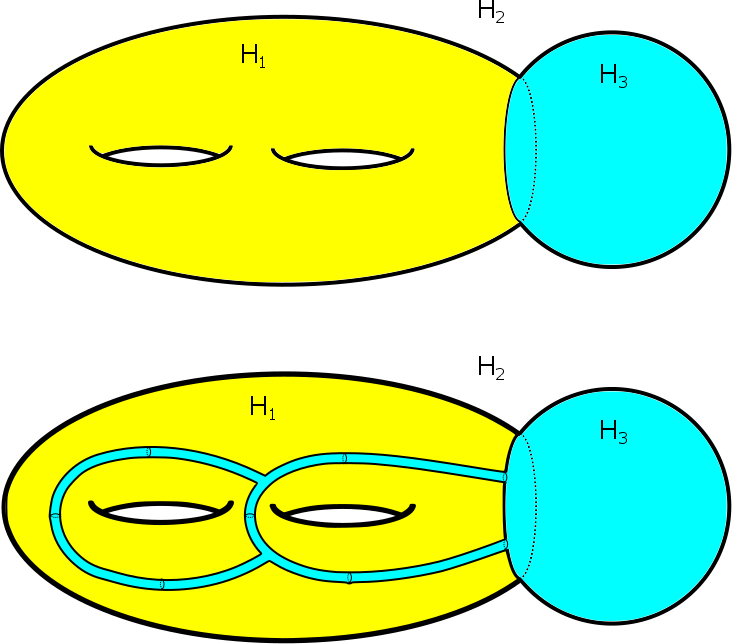}
\caption{A $(2,2,0,1)$ trisection is constructed from a genus-2 trisection of $S^3$ by the construction of Example \ref{trifromheeg}.  A balanced trisection can be obtained by stabilizing $H_3$ twice.}
\label{trifromheegpic}
\end{figure}

\begin{ex}
Suppose $(K,\phi)$ is an open book decomposition of $M$ with binding circle $K$ and $\phi\colon M- K \to S^1$.  Then we can define 
\begin{align*}
&H_1 = K \cup \phi^{-1}([0,1/3]) \\
&H_2 = K \cup \phi^{-1}([1/3,2/3]) \\
&H_3 = K \cup \phi^{-1}([2/3,1])
\end{align*}
Since each $H_i$ is a thickening of a once punctured surface, each is indeed a handlebody.  This gives a $(2g,2g,2g;1)$-trisection of $M$ where $g$ is the genus of the fiber surface.

In fact, this is a special case of Example \ref{trifromheeg}.  If we set $V = K \cup \phi^{-1}([0,1/2])$ and $W = K \cup \phi^{-1}([1/2,1])$ we get a Heegaard splitting of $M$.  Applying the technique of Example \ref{trifromheeg} gives a $(g,g,0;1)$-trisection which can be stabilized to the $(g;1)$-trisection described in this example.  See Figure \ref{fiberedtrisection}.

 It is known that any two open book decompositions of $S^3$ are related by plumbing and deplumbing Hopf bands \cite{GirouxGoodman1}.  Hopf plumbings gives a different notion of stabilization from that used here, but it is worth noting that plumbing and deplumbing of Hopf bands is also not a unique operation.  Trisections of this form appear on the boundary of relative trisections of 4-manifolds as defined in \cite{GayKirby1} and \cite{CastroGayPinzon1}.
\end{ex}

\begin{figure}[ht]
\centering
\includegraphics[scale=.8]{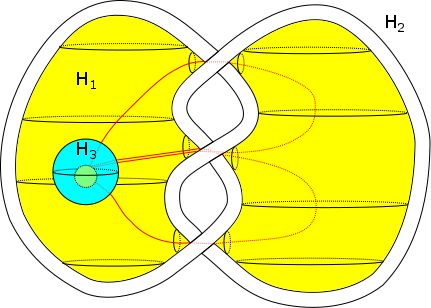}
\caption{We see a $(2,2,0;1)$ trisection where the first two handlebodies are neighborhoods of Seifert surfaces of the trefoil knot.  If we stabilize $H_3$ along the two red arcs, we get a trisection where each handlebody is a neighborhood of a Seifert surface, and the triple intersection curve $B$ is a trefoil.}
\label{fiberedtrisection}
\end{figure}

\begin{ex}
\label{trifromheeg2}
We can generalize the construction of Example \ref{trifromheeg} as follows.  Let $V \cup_\Sigma W$ be a genus $g$ Heegaard splitting of $M$.  Let $H_1 = W$.  Choose some disk properly embedded in $V$ that cuts it into two handlebodies $H_2$ and $H_3$ of genus $h$ and $g-h$ respectively, where $0 \le h \le g$.  Then this gives a $(g,h,g-h;1)$-trisection.  When $h=0$ or $h=g$ this construction reduces to the construction of Example $\ref{trifromheeg}$, possibly after relabelling the handlebodies.
\end{ex}

\begin{figure}[ht]
\centering
\includegraphics[scale=.35]{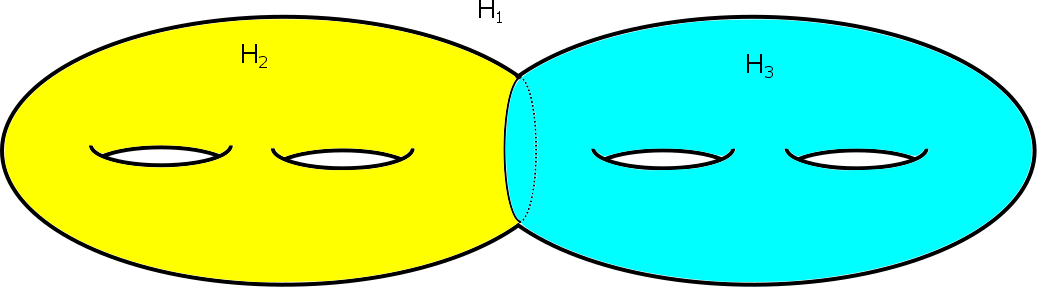}
\caption{Here we construct a $(4,2,2,1)$-trisection of $S^3$ using the technique of Example \ref{trifromheeg2}.  When $H_2$ or $H_3$ has genus 0 then we can relabel handlebodies and perform an isotopy to get the upper trisection in Figure \ref{trifromheegpic}.  Although the disk $H_2 \cap H_3$ here cuts the complement of $H_1$ into two standard handlebodies, it is possible that $H_2$ and $H_3$ are knotted.  An example of this is described in Example \ref{trifromknot}. }
\label{trifromheeg2pic}
\end{figure}

\begin{ex}
\label{trifromknot}
We describe a specific instance of the construction of Example \ref{trifromheeg2}.  Let $K$ be some knot in $S^3$.  Set $H_1 = \overline{N(K})$.  Let $D$ be a disk in $\del H_1$, and $\alpha_1,\dots,\alpha_m$ be a tunnel system for $K$, with the endpoints of each $\alpha_i$ lying in $D$.  Then we can set $H_2 = \overline{N(D \cup \bigcup_i \alpha_i)} $ and $H_3 = \overline{M - H_1 \cup H_2}$.  This gives a $(1,m,m+1;1)$ trisection of $S^3$.  Here $H_1 \cup H_2$ is a handlebody and $H_1 \cap H_2$ a disk.  See Figure \ref{treftrisection} for an example.
\end{ex}
\begin{figure}[ht]
\centering
\includegraphics[scale=.7]{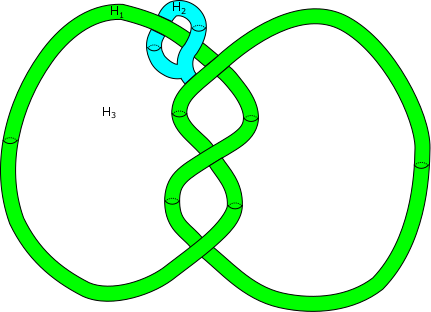}
\caption{A trisection constructed from a trefoil knot and a tunnel. }
\label{treftrisection}
\end{figure}

All examples so far have been stabilizations of the construction in Example \ref{trifromheeg2}.  To provide some different classes of examples, we present some trisections where all three handlebodies have genus lower than the Heegaard genus of the manifold.

\begin{ex}
Suppose $M$ is the connect sum of two 3-manifolds of Heegaard genus 1.  That is, each connect summand is either $S^1 \times S^2$ or a lens space.  $M$ has Heegaard genus 2 by \cite{Haken1}, so we can fix a genus 2 Heegaard splitting $X_1 \cup X_2$.  We produce a $(1,1,1;2)$-trisection of $M$.  See Figure \ref{s1s2tripic} for the case where $M$ is the connect sum of two copies of $S^1 \times S^2$.  We describe this case in detail.

Let $S$ be the reducing sphere splitting $M$ into the two copies of $S^1 \times S^2$. Begin by splitting $X_1$ along the disk $X_1 \cap S$, resulting in two genus 1-handlebodies $H_1$ and $H_2$.  Then both $\del H_1$ and $\del H_2$ contain essential curves $\alpha_1,\alpha_2$ respectively bounding disks in $X_2$.  Let $\gamma$ be an arc connecting $\alpha_1$ and $\alpha_2$ such that the intersection of $\gamma$ with  $\del H_1 \cap \del H_2$ is only a single point.  Let $\alpha$ now denote the result of performing a handle slide of $\alpha_1$ across $\alpha_2$ using the arc $\gamma$.  $\alpha$ intersects both $\del H_1$ and $\del H_2$ in a single arc, and bounds a disk $D$ in $X_2$. Therefore, we can isotope $H_2$ to add a neighbourhood of $D$.  We continue to call the result of this isotopy $H_2$.  $H_1$ and $H_2$ now intersect in an annulus essential in both $\del H_1$ and $\del H_2$, and $H_3 = M - (H_1 \cup H_2)$ is a genus 1-handlebody.  If follows that each intersection $S_{ij}$ is an annulus, and so all pairwise intersections are connected.  We therefore have a $(1,1,1;2)$-trisection as desired.  An identical argument can be applied when one or both of the connect summands are replaced with lens spaces.

\end{ex}
\begin{figure}[ht]
\centering
\includegraphics[scale=.4]{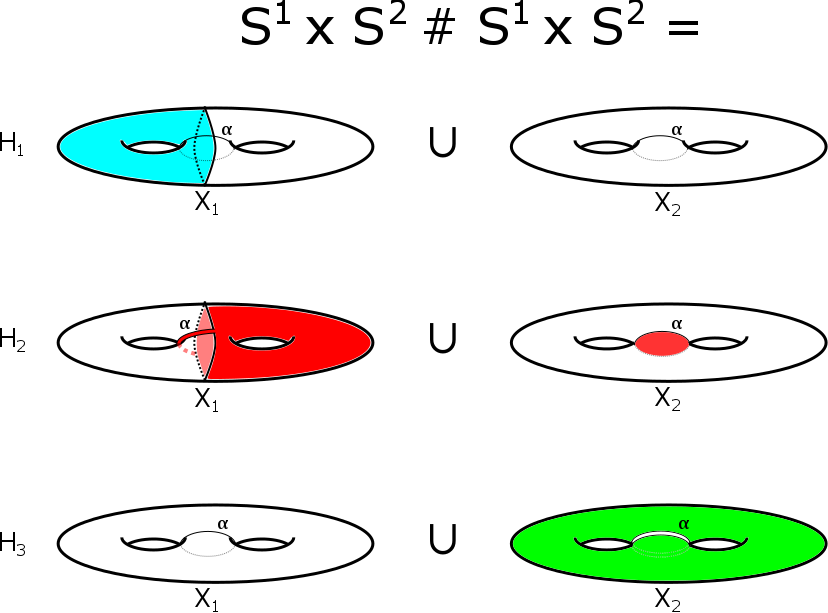}
\caption{A trisection of the connect sum of two copies of $S_1 \times S_2$, viewed as the union of two standard genus 2 handlebodies glued by the identity map on their boundary.  Each row shows how one of the three handlebodies lies in the manifold.}
\label{s1s2tripic}
\end{figure}

\begin{ex}
\label{connectsumtri}
Now consider a more general connect sum $M = M_1 \# M_2$ where both of $M_1,M_2$ have Heegaard genus $g$.  $M$ then has Heegaard genus $2g$ \cite{Haken1}.  Fix genus $g$ Heegaard splittings $(X_1,X_2,\Sigma)$ and $(X_1^*,X_2^*,\Sigma^*)$ for $M_1$ and $M_2$ respectively.  Let $\alpha_1,\cdots,\alpha_g$ (resp. \linebreak $\beta_1,\cdots,\beta_g$) be a collectively nonseparating set of $g$ disjoint curves on $\Sigma$ (resp. $\Sigma^*$) such that each curve bounds a disk in $X_2$ (resp. $X_2^*$).   $(X_1 \# X_1^*,X_2 \# X_2^*;\Sigma \# \Sigma^*)$ is a minimal genus Heegaard splitting for $M$.  Let $C$ be a curve splitting $\Sigma \# \Sigma^*$ into the punctured copies of $\Sigma$ and $\Sigma^*$.  We can apply a diffeomorphism to $\Sigma \# \Sigma^*$ that fixes $C$ and sends $\Sigma$ to $\Sigma$ and $\Sigma^*$ to $\Sigma^*$ in order to get a Heegaard diagram of the form shown in Figure \ref{standardsum}.  For each $i$, let $\gamma_i$ denote the result of sliding $\alpha_i$ across $\beta_i$ using connecting arcs intersecting $C$ once as shown in Figure \ref{standardsum}.  Let $D_1\dots D_g$ denote the meridian disks in $X_2 \# X_2^*$ bounded by the $\gamma_i$.

We can now define a trisection.  Let $H_1 = X_1$, so it is a genus $g$ handlebody.  Define $H_2$ to be the union of $X_1^*$ and the collection of all $N(D_i)$.  Removing these disks $N(D_i)$ from $X_2 \# X_2^*$ leaves another genus $g$ handlebody, which we define to be $X_3$.  $H_2$  as defined is isotopic to $X_1^*$, because we defined it by attaching disks that intersected $X_1^*$ in a single arc each.  Thus, attaching the disk $D_i$ is equivalent to isotoping $X_2$ to extend from the arc $D_i \cap \Sigma^*$ across the disk $D_i$ to the arc $D_i \cap \Sigma$.  We can also observe that each such attachment introduces a new curve component of $H_1 \cap H_2$, so the resulting trisection is a $(g,g,g;g+1)$ trisection.
 
\end{ex}
\begin{figure}[ht]
\centering
\includegraphics[scale=.4]{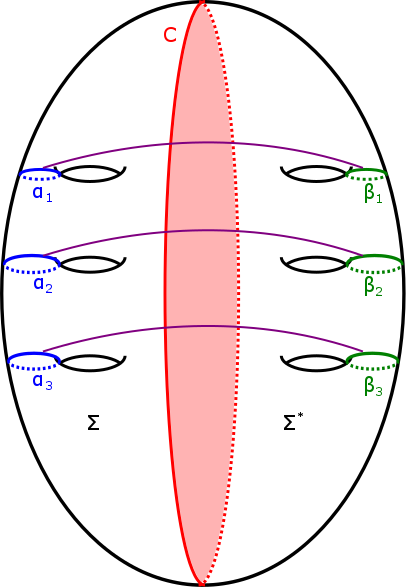}
\caption{The standard picture of what a connect sum Heegaard diagram looks like after forgetting about which curves on $\Sigma$ and $\Sigma^*$ bound disks in the inner handlebody.  The purple arcs are used to guide handle slides. }
\label{standardsum}
\end{figure}

\begin{ex}
Let $\Sigma$ be a closed orientable genus-$g$ surface.  Let $M$ be a surface bundle $\Sigma \times [0,1] / (x,0) \sim (\phi(x),1)$. $M$ has a genus $2g+1$ Heegaard splitting.  If the surface bundle is a product bundle $\Sigma \times S^1$ or if the translation distance of the monodromy map is sufficiently high relative to the genus of $\Sigma$, then it is known that the genus $2g+1$ Heegaard splitting is minimal \cite{Schultens1} \cite{BachmanSchleimer1}.  We produce a $(2g,g+1,g+1;b)$ trisection of $M$, where $b$ is either 1 or 3 depending on whether $g$ is even or odd.

\smallskip
\noindent\textit{Case 1.}\ \ 
$g$ is even.

See Figure \ref{surfbundtri}.  There is a curve $C \subset \Sigma$ cutting $\Sigma$ into two punctured genus $g/2$ surfaces.  Let $\alpha$ be a path in $\Sigma$ such that $\alpha(0)$ lies on $\phi(C)$ and $\alpha(1)$ lies on $C$.  Then the path  $P = \{\alpha(2t)\times t : 0 \le t \le 1/2\}$ is transverse to the fibers, so $\Sigma \times [0,1/2] - N(P)$ is homeomorphic to a thickened punctured genus g surface, and is therefore a genus 2g handlebody.  Let $H_1$ be this handlebody.  Now, $C \times [1/2,1]$ cuts $\Sigma \times [1/2,1]$ into two genus $g$ handlebodies $H_2,H_3$.  Split the tube $N(P)$ into two halves as in Figure \ref{surfbundtri}, and assign half to $H_2$ and half to $H_3$ so that they become genus $g+1$ handlebodies.  Note that performing twists to $N(P)$ will possibly produce non-isotopic trisections.  In the resulting trisection, $H_2 \cap H_3$ is a punctured torus, and each of $H_1 \cap H_2$ and $H_1 \cap H_3$ is the union of two punctured genus g surfaces connected by a band.
\begin{figure}[ht]
\centering
\includegraphics[scale=.45]{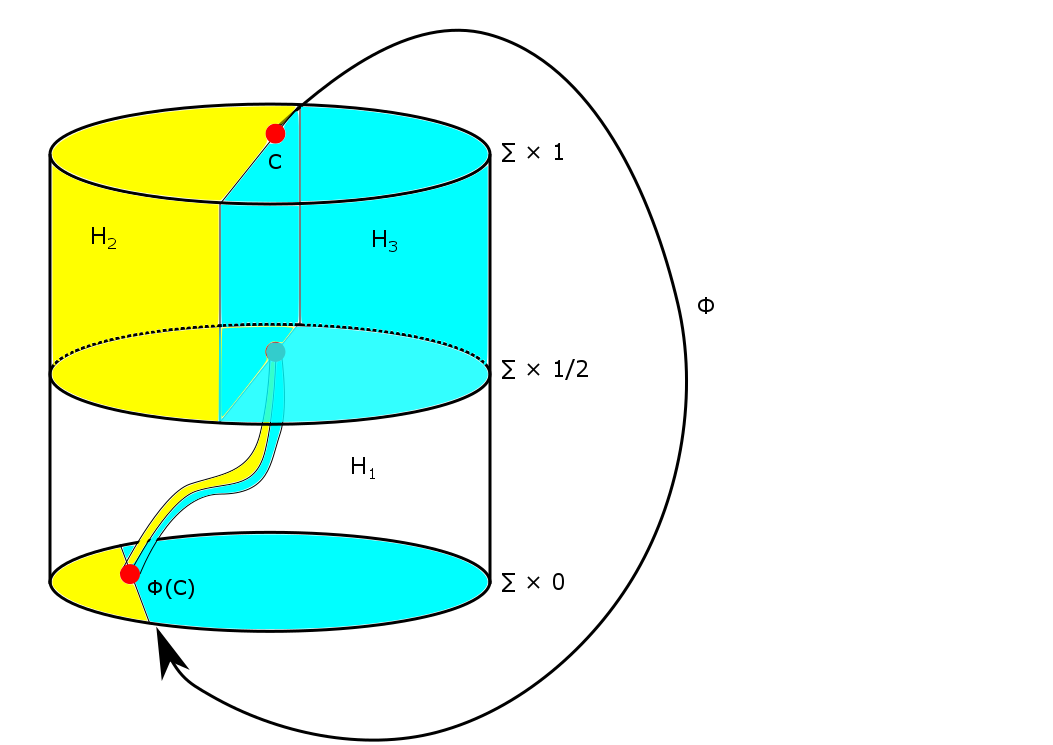}
\caption{A trisection of a surface bundle.}
\label{surfbundtri}
\end{figure}

\smallskip
\noindent\textit{Case 2.}\ \ 
$g$ is odd.

The idea is approximately the same.  Instead of $C$ we choose two curves $C_1,C_2$ cutting $\Sigma$ into two twice punctured genus $(g-1)/2$ surfaces.  Choose the path $\alpha$ to connect a point on $C_1$ to a point on $C_2$.  Everything else goes through as before, and we end up with a $(2g,g,g;3)$ trisection.  $H_2 \cap H_3$ is now a thrice punctured planar surface, and each of $H_1 \cap H_2$ and $H_1 \cap H_3$ is a thrice punctured genus $g-1$ surface.
\end{ex}


\section{Balancing Trisections}
We prove that an unbalanced trisection can be turned into a balanced trisection without increasing the genus of the largest handlebody.
\begin{prop}
\label{balanceprop}
 Let $(H_1,H_2,H_3;B)$ be an $(h_1,h_2,h_3;b)$ trisection of $M$.  Then there is a balanced $(h';b')$-trisection of $M$ that is a stabilization of $(H_1,H_2,H_3;B)$, where $h' = \max(h_1,h_2,h_3)$.  Additionally, we can ensure that $b' \le \max(b,2)$.  That is, either $b' \le b$ or $b' = 2$ and $b = 1$.
\end{prop}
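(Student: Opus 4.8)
The plan is to introduce a genus bookkeeping for the trisection and then raise the two smaller handlebodies one stabilization at a time, choosing arcs so as to control the link $B$. Write $g_{ij}$ for the genus of the connected surface $S_{ij}$, each of which has $b$ boundary circles. Since $\partial H_i = S_{ij}\cup_B S_{ik}$ is a closed orientable surface of genus $h_i$, a Euler characteristic count (using $\chi(B)=0$) yields, for each choice of indices,
\[
h_i = g_{ij} + g_{ik} + b - 1 .
\]
In particular $h_l - h_i = g_{jk} - g_{il}$, so the ordering of the handlebody genera is governed by the ordering of the surface genera; this relation will later guarantee that the stabilizations I need are actually available.

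Next I would analyze a single stabilization of index $i$, namely one absorbing a nonseparating arc $\alpha\subset S_{jk}$ into $H_i$, splitting into two cases according to where the endpoints of $\alpha$ lie on $B$. If both endpoints lie on the same component of $B$, then cutting $S_{jk}$ along $\alpha$ lowers $g_{jk}$ by $1$ and splits that component, so $b\mapsto b+1$ while $g_{ij},g_{ik}$ are unchanged; I will call this a \emph{type A} stabilization. If the endpoints lie on distinct components, cutting merges them, so $b\mapsto b-1$, $g_{jk}$ is unchanged, and the two bands glued onto $S_{ij},S_{ik}$ raise each of $g_{ij},g_{ik}$ by $1$; call this \emph{type B}. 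In either case one checks $h_i\mapsto h_i+1$ and $h_j,h_k$ are unchanged, consistent with the definition of stabilization. The two consequences I want are: a type B stabilization of any index is available whenever $b\ge 2$, since the connected surface $S_{jk}$ then has at least two boundary circles and hence an arc joining two of them, automatically nonseparating; and a type A stabilization of index $i$ is available as soon as $g_{jk}\ge 1$, since a genus-carrying connected surface has a nonseparating arc with both endpoints on a single boundary component.

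With this in hand the argument becomes an induction driven by the deficiencies $h'-h_i$, where $h'=\max(h_1,h_2,h_3)$. I fix an index realizing the maximum and only ever stabilize the other two; since such stabilizations leave $h'$ fixed, the maximum stays at $h'$ and the process terminates exactly when the trisection is balanced of genus $h'$. At each step I pick a still-deficient index $i$ (so $h_i<h'$) and raise $h_i$ by a single stabilization of index $i$, choosing the type by the current $b$: if $b\ge 2$ use type B and lower $b$; if $b=1$ use type A. The legality of the last choice is exactly where the genus relation is used: when $h_i<h'$ there is a dominating index $l$ with $h_l-h_i = g_{jk}-g_{il}\ge 1$, forcing $g_{jk}\ge 1$, so the needed type A stabilization genuinely exists precisely when it is called for, and the surfaces remain connected because a stabilization always produces a trisection.

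Finally I would track $b$ along this sequence. Since type B is used whenever $b\ge 2$ and type A (which sends $b=1$ to $b=2$) only when $b=1$, the value of $b$ never exceeds $\max(b_{\mathrm{start}},2)$ and, once it reaches $1$, merely oscillates within $\{1,2\}$; a short split on whether $b_{\mathrm{start}}\ge 2$ or $b_{\mathrm{start}}=1$ then gives $b'\le\max(b,2)$, with $b'=2$ possible only out of $b=1$. I expect the genuine obstacle to be the honest verification in the second step: pinning down from the local model of the tube $N(\alpha)$ near the triple curve exactly how a stabilization changes $b$ and the three surface genera, and confirming that types A and B exhaust the nonseparating arcs. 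Once that local computation is secured, the genus relations and the greedy choice of arc type reduce the genus balancing and the bound on $b'$ to bookkeeping.
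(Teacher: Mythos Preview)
Your argument is correct and follows the same strategy as the paper: repeatedly stabilize a handlebody of sub-maximal genus along a nonseparating arc in the opposite surface, choosing the arc to join distinct components of $B$ whenever $b\ge 2$. The one substantive difference is in how you justify that the needed arc exists when $b=1$: the paper observes that if $S_{ij}$ were a disk then $H_i\cup H_j$ and $H_k$ would form a Heegaard splitting, forcing $h_i+h_j=h_k$ and contradicting $h_i>h_k$; you instead deduce $g_{jk}\ge 1$ directly from the Euler-characteristic identity $h_i=g_{ij}+g_{ik}+b-1$, which the paper in fact records immediately \emph{after} this proof. One small index slip: with $l\in\{j,k\}$ the dominating index and $m$ the remaining one, the correct relation is $h_l-h_i=g_{jk}-g_{im}$, not $g_{jk}-g_{il}$; since $g_{jk}\ge h_l-h_i\ge 1$ either way, your conclusion stands.
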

\begin{proof}
Choose $i,j,k$ to be a permutation of $1,2,3$ such that $h_i \ge h_j \ge h_k$.  If $h_i = h_j = h_k$ then we are done.  Otherwise, we know that $h_i > h_k$.  Now, if $S_{ij}$ were a disk, then $H_i \cup H_j$ would be a genus $h_i + h_j$ handlebody with complement $H_k$.  This would give a Heegaard splitting and would imply that $h_i + h_j = h_k$, which contradicts $h_i > h_k$.  Therefore $S_{ij}$ is not a disk.  Hence there exists some nonseparating arc $\alpha$ properly embedded in $S_{ij}$.  Moreover, unless $b = 1$, we can choose $\alpha$ to have its endpoints lie on two distinct components of $B$.  Performing a stabilization with this choice of $\alpha$ gives a $(h_i,h_j,h_k+1; b')$-trisection, where $b'$ is either $b-1$ or $2$.  This operation does not increase the genus of any handlebody beyond $h_i$, and only increases $b$ if $b=1$.  Therefore, we can repeat the operation until we get a balanced trisection of genus $h_i$.
\end{proof}

We investigate the surfaces $S_{ij}$.  Again, let $i,j,k$ be some permutation of $1,2,3$.  We can compute the genus of the handlebody $H_i$ from $b$ and the genera $g(S_{ij})$ and $g(S_{ik})$ of $S_{ij}$ and $S_{ik}$ by the formula $h_i = g(S_{ij}) + g(S_{ik}) + b - 1$.  In a balanced trisection, $h_1 = h_2 = h_3$.  Comparing the formula for $h_1$ and $h_2$ we see that $g(S_{13}) = g(S_{23})$.  Similarly we can compare the formulas for $h_2$ and $h_3$ to see that $g(S_{12}) = g(S_{13})$.  Therefore, in a balanced trisection, all $g(S_{ij})$ are the same, and are equal to $\frac{h + 1 - b}{2}$.  Since this must be an integer, we also get the following:
\begin{rem}
\label{parityremark}
In a balanced $(h,b)$-trisection, $b$ and $h$ must have opposite parities.
\end{rem}

\section{Trisections, Heegaard Splittings, and Trisection Genus}

Just as the Heegaard genus $g(M)$ of a 3-manifold is defined as the smallest $g$ for which $M$ has a genus $g$ Heegaard splitting, we can define the trisection genus $t(M)$ to be the smallest $t$ for which $M$ has a balanced trisection of genus $t$.  Here we state some facts about trisection genus, and about how trisections relate to Heegaard splittings.

\begin{prop}
If $M$ is a closed orientable 3-manifold, its Heegaard genus $g(M)$ and trisection genus $t(M)$ are related by
$$t(M) \le g(M) \le 2t(M)$$
\end{prop}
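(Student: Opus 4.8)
The plan is to prove the two inequalities separately, exploiting the constructions and formulas already established in the excerpt. For the lower bound $t(M) \le g(M)$, I would start from a minimal-genus Heegaard splitting $V \cup_\Sigma W$ of $M$, so that $\Sigma$ has genus $g = g(M)$. The natural move is to invoke the construction of Example \ref{trifromheeg}: placing a disk $D \subset \Sigma$ and setting $H_3 = N(D)$, $H_1 = \overline{V - V \cap H_3}$, $H_2 = \overline{W - W \cap H_3}$ yields a $(g,g,0;1)$-trisection. Stabilizing $H_3$ exactly $g$ times (along nonseparating arcs) balances this to a genus $g$ trisection, as asserted in that example. Since a balanced trisection of genus $g$ exists, by definition $t(M) \le g$.

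For the upper bound $g(M) \le 2t(M)$, I would begin with a balanced trisection $(H_1,H_2,H_3;B)$ realizing the trisection genus, so each $H_i$ has genus $t = t(M)$. The key idea is that the union of two of the handlebodies should form one side of a Heegaard splitting. First I would check that $H_1 \cup H_2$ is a handlebody: since $S_{12} = H_1 \cap H_2$ is a compact connected surface with boundary along which the two handlebodies are glued, $H_1 \cup H_2$ is obtained by gluing two handlebodies along a connected subsurface of their boundaries, which is again a handlebody. Its complement is exactly $H_3$, also a handlebody, so $(H_1 \cup H_2, H_3)$ is a Heegaard splitting of $M$. The remaining task is to bound its genus. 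Using the genus formula $h_i = g(S_{ij}) + g(S_{ik}) + b - 1$ recorded just before Remark \ref{parityremark}, together with the fact that in a balanced trisection all $g(S_{ij})$ are equal to $\tfrac{t+1-b}{2}$, I can compute the genus of $H_1 \cup H_2$ in terms of $t$ and $b$ via an Euler characteristic (or handle-counting) argument on the gluing along $S_{12}$.

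I would carry out the genus computation by Euler characteristic: if $H_1, H_2$ have genus $t$ and are glued along the connected surface $S_{12}$ of genus $s := g(S_{12})$, then the genus of $H_1 \cup H_2$ is $2t - 2s - (\text{correction for the boundary components of } S_{12})$. Substituting $s = \tfrac{t+1-b}{2}$ should collapse the expression to something at most $2t$, giving $g(M) \le g(H_1 \cup H_2) \le 2t$. The main obstacle I anticipate is precisely this bookkeeping: correctly accounting for the number of boundary components of $S_{12}$ (which equals the number of components of $B$, namely $b$) and how the Euler characteristics combine across the gluing, so that the genus of the Heegaard surface comes out in closed form. The parity constraint from Remark \ref{parityremark} ($b$ and $t$ have opposite parity) guarantees $s$ is a genuine nonnegative integer, which keeps the formula consistent, and I would use it as a sanity check on the computation.

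<br>

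One subtlety worth flagging is the claim that $H_1 \cup H_2$ is genuinely a handlebody rather than merely a compression body or something with spherical boundary components; I would justify this directly from the trisection axioms, noting that all three pairwise intersections being connected is exactly what forces the union of any two handlebodies to be a handlebody whose complement is the third. This connectedness hypothesis, emphasized in the introduction as the distinguishing feature of these trisections, is what makes the Heegaard splitting drop out cleanly.
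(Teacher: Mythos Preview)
Your argument for $t(M)\le g(M)$ is fine and matches the paper. The gap is in your proof of $g(M)\le 2t(M)$: the assertion that $H_1\cup H_2$ is a handlebody whenever $S_{12}$ is a connected surface with boundary is false. Gluing two handlebodies along a connected planar or positive-genus subsurface of their boundaries need not produce a handlebody; only when the gluing surface is a \emph{disk} is this automatic. The connectedness of the pairwise intersections $S_{ij}$ does not force this.

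You can see the error from two directions. First, $\partial(H_1\cup H_2)=\partial H_3$ has genus $t$, so if $H_1\cup H_2$ were a handlebody you would obtain a genus $t$ Heegaard splitting and hence $g(M)\le t$; combined with the other inequality this would give $g(M)=t(M)$ for every $M$, contradicting the examples in the paper with $g(M)=2t(M)$. Second, Example~\ref{connectsumtri} gives an explicit $(g,g,g;g+1)$-trisection of $M=M_1\# M_2$ with $g(M)=2g$; there $H_1\cup H_2$ has a genus $g$ boundary but cannot be a handlebody, since $M$ admits no genus $g$ Heegaard splitting. The paper's proof avoids this by first \emph{stabilizing} $H_i$ along a maximal collection of nonseparating arcs in $S_{jk}$ until $S_{jk}$ becomes a disk; then $H_j'\cup H_k'$ is genuinely a handlebody, and the $t$ stabilizations needed raise $g(H_i')$ to $2t$, yielding a genus $2t$ Heegaard splitting.
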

\begin{proof}
First, note that by combining the construction of Example \ref{trifromheeg} or \ref{trifromheeg2} with Proposition \ref{balanceprop}, whenever $M$ has a Heegaard splitting of genus $g$ we can also construct balanced trisections of genus $g$.  It follows that $t(M) \le g(M)$.  We can also get a Heegaard splitting from a trisection $(H_1,H_2,H_3;B)$ as follows.  First choose one of the three handlebodies $H_i$, and let $j,k$ be the indices not chosen.  Choose a maximal set of nonseparating arcs in $S_{jk}$, and stabilize $H_i$ along each of these arcs in turn to get a new trisection $(H_1',H_2',H_3';B')$.  In this trisection, $S_{jk}'$ is now a disk, since if it were not then there would be some nonseparating arc in it, contradicting the maximality of our choice of arcs.  Therefore, $H_j' \cup H_k'$ is a handlebody.  If we started with a balanced trisection of genus $h$ then $h$ stabilizations were required to make $S_{jk}'$ a disk, so $g(H_i') = 2h$.  It follows that $(H_i',H_j' \cup H_k';\del H_i')$ is a genus $2h$ Heegaard splitting.  Applying this construction to a minimal genus balanced trisection of $M$, we conclude that $g(M) \le 2t(M)$.  
\end{proof}

Since the construction used in the previous proposition is quite useful, we set it aside as a definition.

\begin{defn}
Suppose $(H_1,H_2,H_3;B)$ is a trisection.  If we stabilize $H_i$ along a maximal set of arcs in $H_{jk}$, then we get a trisection $(H_1',H_2',H_3';B')$ where $H_j \cup H_k$ is a handlebody, so $(H_i, H_j \cup H_k;\del H_i)$ is a Heegaard splitting.  We call this the \emph{Heegaard splitting built from the trisection  $(H_1,H_2,H_3;B)$ by stabilizing $H_i$}.  If we do not care which $i$ was chosen, we just say that it is a Heegaard splitting built from the trisection.
\end{defn}

\begin{rem}
For a given trisection $(H_1,H_2,H_3;B)$, we do not know that, for example, the Heegaard splitting built by stabilizing $H_1$ and the Heegaard splitting built by stabilizing $H_2$ are isotopic.  However, after fixing a choice of handlebody $H_i$ we are stabilizing along a maximal set of arcs in $S_{jk}$.  Any two such maximal system of arcs in $S_{jk}$ are slide equivalent, so any two choices of arc systems will result in isotopic Heegaard splittings.  It follows that there are at most three isotopy classes of Heegaard splittings that can be built from a given trisection, one for each choice of handlebody $H_i$.
\end{rem}

It is natural to ask how strict these inequalities are.  We have already shown that the inequality $g(M) \le 2t(M)$ is the best general bound possible;  Example \ref{connectsumtri} demonstrates that if $M = M_1 \# M_2$ is a connect sum with $g(M_1) = g(M_2)$ then $t(M) = g(M_1)$.  However, it is known that $g(M) =2g(M_1)= 2t$ \cite{Haken1}.  This gives us
\begin{prop}
Suppose both $M_1$ and $M_2$ are closed orientable 3-manifolds with Heegaard genus $g$.  Let $M = M_1 \# M_2$.  Then $M$ has Heegaard genus $2g$ and trisection genus $g$.
\end{prop}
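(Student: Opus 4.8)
The plan is to combine Haken's additivity of Heegaard genus with the two inequalities relating Heegaard and trisection genus established just above, proving the two asserted equalities separately. The Heegaard genus claim is immediate: by \cite{Haken1} the Heegaard genus is additive under connected sum, so $g(M) = g(M_1) + g(M_2) = 2g$. All remaining content concerns the trisection genus $t(M)$, where I would argue the bounds $t(M) \le g$ and $t(M) \ge g$ independently and then combine them.

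For the upper bound I would invoke Example \ref{connectsumtri} verbatim. Since both summands have Heegaard genus exactly $g$, that construction—fixing genus $g$ Heegaard splittings of each summand, taking the connected sum, and performing the guided handle slides $\alpha_i \mapsto \gamma_i$—produces a balanced $(g,g,g;g+1)$-trisection of $M$. Exhibiting a balanced trisection of genus $g$ is precisely what gives $t(M) \le g$. As an internal consistency check I would note that Remark \ref{parityremark} requires $b$ and $h$ to have opposite parity in a balanced $(h,b)$-trisection, and indeed $h = g$ and $b = g+1$ do; this confirms the example is genuinely a balanced trisection rather than an arithmetic accident.

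For the lower bound I would apply the inequality $g(M) \le 2t(M)$ from the preceding proposition. Together with $g(M) = 2g$ this yields $2g \le 2t(M)$, hence $t(M) \ge g$. The two bounds combine to give $t(M) = g$, completing the proof. I expect no real obstacle: the only nontrivial input is the explicit construction of Example \ref{connectsumtri}, which has already been carried out, and both the matching lower bound and the Heegaard-genus computation are formal consequences of results now in hand. The single point meriting care is checking that the hypotheses of Example \ref{connectsumtri}—Heegaard genus exactly $g$ for each summand—coincide with what is assumed here, which they do.
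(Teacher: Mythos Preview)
Your proposal is correct and mirrors the paper's own argument essentially step for step: Haken's additivity gives $g(M)=2g$, Example~\ref{connectsumtri} supplies the balanced genus-$g$ trisection for the upper bound $t(M)\le g$, and the inequality $g(M)\le 2t(M)$ from the preceding proposition forces the matching lower bound. The parity check against Remark~\ref{parityremark} is a pleasant sanity check but not part of the paper's exposition.
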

\begin{corr}
For each integer $t\ge 0$, there exists a 3-manifold with trisection genus $t$ and Heegaard genus $2t$.
\end{corr}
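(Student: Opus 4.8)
The plan is to lean entirely on the preceding Proposition, which states that if $M_1$ and $M_2$ are closed orientable 3-manifolds of Heegaard genus $g$, then $M_1 \# M_2$ has Heegaard genus $2g$ and trisection genus $g$. This immediately reduces the Corollary to a purely existential question: for each $t \ge 0$, I need only exhibit a closed orientable 3-manifold $N_t$ of Heegaard genus exactly $t$, and then set $M = N_t \# N_t$. By the Proposition, such an $M$ has Heegaard genus $2t$ and trisection genus $t$, which is precisely what is claimed.

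First I would produce the manifolds $N_t$. Fix a lens space $L = L(p,q)$ with $p \ge 2$; this is closed, orientable, and has Heegaard genus $1$. For $t \ge 1$ I would take $N_t$ to be the connect sum of $t$ copies of $L$, and I would adopt the convention that the empty connect sum ($t = 0$) is $S^3$. Haken's additivity of Heegaard genus under connected sum \cite{Haken1} then gives $g(N_t) = t$ for every $t \ge 0$. Each $N_t$ is closed and orientable, so it satisfies the hypotheses of the Proposition.

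With these in hand, the construction is immediate: applying the Proposition to $M_1 = M_2 = N_t$, both of Heegaard genus $t$, shows that $M = N_t \# N_t$ has Heegaard genus $2t$ and trisection genus $t$. The base case $t = 0$ is then subsumed automatically, since $M_1 = M_2 = S^3$ have Heegaard genus $0$ and $S^3 \# S^3 = S^3$ has Heegaard genus $0 = 2 \cdot 0$ and trisection genus $0$ (realized by the trivial trisection into three balls).

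I do not expect any genuine obstacle here, as all the real content is carried by the Proposition and by Haken's theorem, both of which are already available. The only points requiring a moment of care are confirming the existence of closed orientable 3-manifolds of every Heegaard genus—handled cleanly by the lens-space connect sums together with additivity—and verifying that the chosen $N_t$ meet the closedness and orientability hypotheses of the Proposition, which they do by construction.
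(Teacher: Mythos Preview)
Your proposal is correct and follows exactly the approach implicit in the paper: the Corollary is stated without proof immediately after the Proposition, and you have simply made explicit the (standard) construction of closed orientable 3-manifolds of every Heegaard genus via iterated connect sums of lens spaces and Haken additivity. There is nothing different here beyond spelling out a detail the paper leaves to the reader.
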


We can also ask whether for every $t \ge 0$ there exists a 3-manifold $M$ such that both the trisection and Heegaard genus are equal to $t$.  $S^3$ satisfies this for $t=0$, and any Lens space satisfies it for $t=1$.  The fact that there exist examples for $t=2$ follows from the following proposition relating Heegaard splittings built from trisections to Hempel distance \cite{Hempel1}.

\begin{prop}
\label{builtheegdist}
Suppose $(H_1,H_2,H_3;B)$ is a trisection of $M$ such that no $H_i$ has genus $g(H_i) = 0$.  Then any Heegaard splitting built from $(H_1,H_2,H_3;B)$ has distance at most 2.
\end{prop}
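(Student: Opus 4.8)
The plan is to produce, on the Heegaard surface $\Sigma$ of the built splitting, an explicit length-two path in the curve complex running from a meridian of one side to a meridian of the other. Assume the splitting is built by stabilizing $H_i$, and write $(V,W)=(H_i',H_j'\cup H_k')$ with $\Sigma=\del H_i'$. I will treat the generic case in which $S_{jk}$ is not already a disk, so that at least one stabilization is actually performed; in a balanced trisection of positive genus this always holds, since $g(S_{jk})=\tfrac{h+1-b}{2}$ forces $S_{jk}$ to be a disk only when $h=0$.

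First I would record the relevant structure of the built splitting. Stabilizing $H_i$ leaves $h_j$ and $h_k$ unchanged, so $h_j,h_k\ge 1$, and in the resulting trisection $S_{jk}'$ is a disk $D\subset W$ with $b'=1$. Using $h_\ell=g(S_{\ell m})+g(S_{\ell n})+b-1$, the curve $B'=\del D$ separates $\Sigma$ into $S_{ij}'$ and $S_{ik}'$ of genera $h_j\ge 1$ and $h_k\ge 1$. Since $D$ cuts $W$ into the two positive-genus handlebodies $H_j'$ and $H_k'$, it is essential, so $B'$ is an essential curve on $\Sigma$ that bounds a disk in $W$.

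Next I would extract a controlled meridian of $V$. A stabilization attaches a one-handle to $H_i$ along an arc $\alpha\subset S_{jk}$ with $\del\alpha\subset B$; its co-core $E$ is a meridian disk of $V=H_i'$, and the local picture near $\alpha$ shows that $\del E$ crosses $B'$ in exactly two points, once near each endpoint of $\alpha$. Hence $\del E\cap S_{ij}'$ is a single arc. A single properly embedded arc cannot fill the positive-genus surface $S_{ij}'$, so there is an essential simple closed curve $c$ in the interior of $S_{ij}'$ disjoint from $\del E$; being interior, $c$ is also disjoint from $B'$. Then $\del E,\,c,\,B'$ is a path in the curve complex of $\Sigma$ with $\del E$ bounding a disk in $V$ and $B'$ bounding a disk in $W$, so the distance of the splitting is at most $2$.

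The main obstacle is the geometric claim that the co-core meets $B'$ in exactly two points and meets $S_{ij}'$ in a single arc; this needs a careful analysis of a neighborhood of the stabilizing tube, tracking how the arc $\alpha$ on $S_{jk}$ becomes part of $\del D=B'$ after cutting. A secondary point is to verify that the curves produced, namely $B'$, $\del E$, and $c$, are essential on $\Sigma$ and pairwise distinct where needed; this follows from the positive-genus conclusions of the first step together with $\Sigma$ having genus $h_j+h_k\ge 2$. Finally, the degenerate case in which $S_{jk}$ is already a disk, so that no handle is added, must be isolated: there $V=H_i$ and one either needs a genuinely different source of a low-intersection meridian, or the observation that this case does not arise for balanced trisections of positive genus.
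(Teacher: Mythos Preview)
Your argument for the case in which at least one stabilization is performed is correct and runs parallel to the paper's proof: both take as the $V$-meridian the co-core loop $\partial E$ of one of the attached one-handles. The difference lies in the other two curves. The paper uses as its $W$-meridian the boundary $\gamma$ of a meridian disk of $H_j'$, isotoped via the disk $S_{jk}'$ so that $\gamma\subset S_{ij}'$, and as its middle curve an essential curve $\beta$ in the \emph{original} $S_{ik}\subset S_{ik}'$; disjointness of $\partial E$ from $\beta$ follows because the co-core loop lives on the new tube, away from the old $S_{ik}$, while $\beta$ and $\gamma$ are disjoint because $S_{ik}'$ and $S_{ij}'$ meet only along $B'$. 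Your choice of $B'$ itself as the $W$-meridian, together with an essential curve $c\subset S_{ij}'$ avoiding the single arc $\partial E\cap S_{ij}'$, is equally valid and arguably more direct, since $B'$ is already at hand and no isotopy of a meridian off of $S_{jk}'$ is needed. One minor geometric slip: the two points of $\partial E\cap B'$ lie over an interior point of $\alpha$, one on each of the two parallel copies of $\alpha$ that make up the new portion of $B'$, not ``once near each endpoint of $\alpha$''; the intersection count of two, and hence the single-arc conclusion, is nonetheless correct.

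Your caution about the degenerate case in which $S_{jk}$ is already a disk is well founded; the paper's proof also silently assumes that at least one stabilization occurs, since otherwise there is no co-core loop to use. In fact this case cannot be repaired: take any genus-$2$ Heegaard splitting $(V,W)$ of distance at least $3$, let $D\subset W$ be a separating meridian disk cutting $W$ into two solid tori $H_2,H_3$, and set $H_1=V$. Then $(H_1,H_2,H_3;\partial D)$ is a trisection with all $h_i\ge 1$ and $S_{23}=D$ already a disk, so the Heegaard splitting built by ``stabilizing $H_1$'' is $(V,W)$ itself, of distance at least $3$. Thus the hypothesis you isolate is genuinely needed, and your observation that it always holds for balanced trisections of positive genus is exactly what is required for the corollary the paper draws from this proposition.
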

\begin{proof}
Suppose without loss of generalization that we build a Heegaard stabilization by stabilizing $H_1$.  Let the trisection achieved by stabilization be $(H_1',H_2',H_3';B)$ so that $(H_1',H_2' \cup H_3'; \del H_1')$ is the Heegaard splitting.  In order to demonstrate the distance bound we find a sequence of 3 curves $\alpha,\beta,\gamma$ on $\del H_1'$ such that $\alpha$ bounds a disk in $H_1'$ and $\gamma$ bounds a disk in $H_2' \cup H_3'$.  See Figure \ref{heegfromtri} for a picture of the case where each handlebody is genus 1, which easily generalizes to higher genus.

 Let $\alpha$ be a loop enclosing a cocore of one of the stabilizations that took $H_1$ to $H_1'$. Let $\beta$ be some curve in $H_1 \cap H_3 \subset \del H_1$, and note that the stabilization occured away from $\beta$, so we can treat $\beta$ as also lying in $H_1' \cap H_3'$. 

 Now to find $\gamma$, first choose any meridian disk $D$ of $H_2'$.  Since $H_2' \cap H_3'$ is a disk, we can isotope $D$ so that $\del D$ lies in $H_1' \cap H_2'$. 
  Set $\gamma$ to be $\del D$.  By construction $\alpha \subset H_1' - H_1 \cap H_3$, $\beta \subset H_1' \cap H_3'$, and $\gamma \subset H_1' \cap H_2'$.  Thus, $\alpha \cap \beta$ is empty, as is $\beta \cap \gamma$.  Moreover, $\alpha$ bounds a disk in $H_1'$ and $\gamma$ bounds a disk in the complement of $H_1'$, so this is indeed a distance 2 path. 
\end{proof}
\begin{figure}[ht]
\centering
\includegraphics[scale=.5]{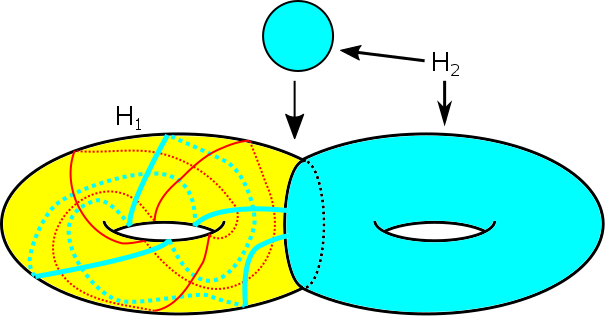}
\caption{Here we see a Heegaard splitting built from a balanced genus 1 trisection. $H_1$ is the yellow torus on the left, and $H_2$ the union of the blue torus, the blue band, and the blue disk attached in some way along the boundary.  $H_1'$ is obtained from $H_1$ by attaching an arc that intersects a meridian disk of the blue torus once.  Therefore, $H_1'$ is isotopic to the union of $H_1$ and the blue torus.   $\alpha$ is then a meridian curve of the blue torus, $\beta$ the red curve on the left, and $\gamma$ the attaching curve for the blue disk.}
\label{heegfromtri}
\end{figure}
\begin{corr}
There exist 3-manifolds with both Heegaard genus and trisection genus equal to 2.
\end{corr}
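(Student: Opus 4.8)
The plan is to exploit Proposition \ref{builtheegdist} in its contrapositive form: a trisection all of whose handlebodies have positive genus can only occur in a manifold admitting a Heegaard splitting of small distance. Concretely, I would look for a closed orientable $3$-manifold $M$ whose genus-$2$ Heegaard splittings all have Hempel distance strictly greater than $2$, and then argue that such an $M$ has both $g(M) = 2$ and $t(M) = 2$.

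For the existence of the candidate manifold I would invoke Hempel's construction \cite{Hempel1}: gluing two genus-$2$ handlebodies by a high power of a pseudo-Anosov mapping class of the genus-$2$ surface produces a Heegaard splitting of arbitrarily large distance $d$. Fixing $d \ge 5$ gives a splitting $H$ of genus $2$ and distance at least $5$. The point of taking the distance this large is to pin down the Heegaard genus and the full set of genus-$2$ splittings: by the Scharlemann--Tomova theorem, any Heegaard surface of $M$ that is not a stabilization of $H$ has genus at least $d/2 > 2$. Since $d \ge 5 > 2$, the splitting $H$ is itself unstabilized, so $g(M) = 2$ and $H$ is, up to isotopy, the unique genus-$2$ Heegaard splitting of $M$.

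With $g(M) = 2$ in hand, the inequality $t(M) \le g(M)$ gives $t(M) \le 2$, so it remains to rule out $t(M) = 0$ and $t(M) = 1$. The case $t(M) = 0$ is immediate, since a balanced genus-$0$ trisection consists of three balls and forces $M = S^3$, whereas $g(M) = 2$. For $t(M) = 1$, I would suppose $M$ admits a balanced genus-$1$ trisection $(H_1,H_2,H_3;B)$. Every $H_i$ then has genus $1 > 0$, so Proposition \ref{builtheegdist} applies and the Heegaard splitting built from this trisection has distance at most $2$; moreover this built splitting has genus $2\cdot 1 = 2$. By the uniqueness established above it must be isotopic to $H$, whose distance is at least $5$ --- a contradiction. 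Hence no genus-$1$ trisection exists, so $t(M) \ge 2$ and therefore $t(M) = g(M) = 2$.

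The main obstacle is the uniqueness step, not the distance bound from Proposition \ref{builtheegdist}. A priori a manifold could carry several inequivalent genus-$2$ splittings, and if even one of them had distance at most $2$ the argument against $t(M)=1$ would collapse, since the splitting built from a hypothetical genus-$1$ trisection need not be the high-distance one. Forcing genuine uniqueness of the genus-$2$ splitting is exactly what the Scharlemann--Tomova genus bound provides, and choosing $d \ge 5$ rather than merely $d \ge 3$ is what makes $2 < d/2$, so that no competing genus-$2$ splitting can exist.
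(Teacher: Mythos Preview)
Your proposal is correct and follows essentially the same route as the paper: take a genus-$2$ Heegaard splitting of distance at least $5$ via Hempel, invoke Scharlemann--Tomova to force it to be the unique genus-$2$ splitting, and then use Proposition~\ref{builtheegdist} to rule out a genus-$1$ trisection since it would produce a genus-$2$ splitting of distance at most $2$. Your write-up is in fact slightly more careful than the paper's in explicitly ruling out $t(M)=0$ and in articulating why the threshold $d\ge 5$ is needed for the uniqueness step.
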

\begin{proof}
Suppose $M$ is a 3-manifold with a Heegaard splitting of genus $g=2$ and distance at least 5.  It is known that such manifolds exist by \cite{Hempel1}. It is known that when a Heegaard surface has distance $d > 2g$,  it represents the unique minimal genus Heegaard splitting \cite{ScharlemannTomova1}.  It follows that $M$ has no other genus 2 Heegaard splittings, and hence has no genus 2 Heegaard splitting of distance $\le 2$.  $M$ does have a trisection of genus 2 by Example \ref{trifromheeg}. If $M$ had a genus 1 trisection, it would have a distance 2 Heegaard splitting by Proposition \ref{builtheegdist}, which would be a contradiction.  Therefore, both the trisection genus and Heegaard genus of $M$ must be equal to 2. 
\end{proof}
This corollary can also be derived using the classification of genus 1 trisections by Gomez-Larra{\`n}aga \cite{gomez1}, since any 3-manifold not in his list that has a genus 2 Heegaard splitting necessarily also has trisection genus 2.

 It would be interesting to know whether there exist higher genus examples with trisection genus equal to their Heegaard genus.


\section{The Stabilization Theorem}
Before proving the theorem, we need one more definition.
\begin{defn}
Suppose $(H_1,H_2,H_3;B)$ is a trisection of $M$ such that $(H_1,H_2 \cup H_3; \del H_1)$ is a Heegaard splitting.  Suppose moreover that there exists a disk $D$ properly embedded in $H_1$ such that $\del D$ consists of a nonseparating arc in $S_{12}$ and a nonseparating arc in $S_{13}$.  We can stabilize $H_2$ and then $H_1$ as in Figure \ref{fakeheeg}.  We call this operation a \emph{fake Heegaard stabilization}.  The effect of this operation is to perform a ``standard" stabilization between $H_1$ and $H_2$, as in Figure \ref{standstab}.
\end{defn}
\begin{rem}
\label{fakeheegremark}
A disk $D$ as required in the above definition always exists if we have just stabilized $H_1$.  Indeed, stabilizing changes $H_1$ by attaching a 1-handle to it, and a core disk of this one handle will satisfy the requirements.  Once we have found such a disk, we can use parallel copies of it to perform an arbitrary number of fake Heegaard stabilizations.
\end{rem}

\begin{figure}[ht]
\centering
    \begin{subfigure}[ht]{0.45\textwidth}
        \centering
        \includegraphics[height=1.6in]{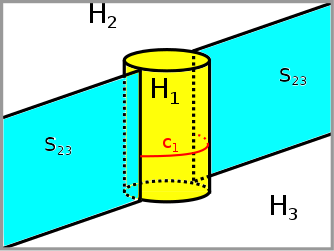}
\caption{}
    \end{subfigure}
    ~ 
    \begin{subfigure}[ht]{0.45\textwidth}
        \centering
        \includegraphics[height=1.6in]{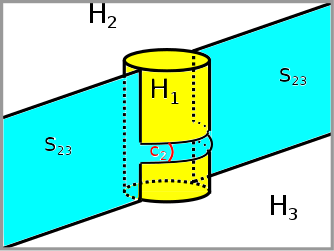}
\caption{}
    \end{subfigure}
\\
    \begin{subfigure}[ht]{0.45\textwidth}
        \centering
        \includegraphics[height=1.6in]{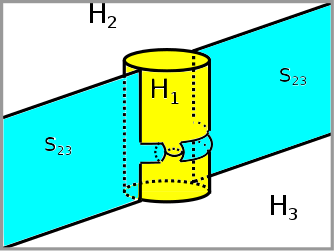}
\caption{}
    \end{subfigure}
    ~ 
    \begin{subfigure}[ht]{0.45\textwidth}
        \centering
        \includegraphics[height=1.6in]{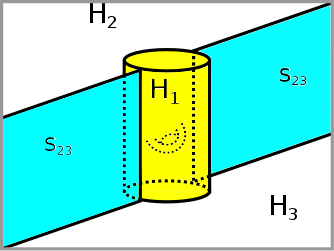}
\caption{}
    \end{subfigure}
\caption{We perform a fake Heegaard stabilization by stabilizing $H_2$ and then $H_1$.}
\label{fakeheeg}
\end{figure}

\begin{figure}[ht]
\centering
\includegraphics[scale=.35]{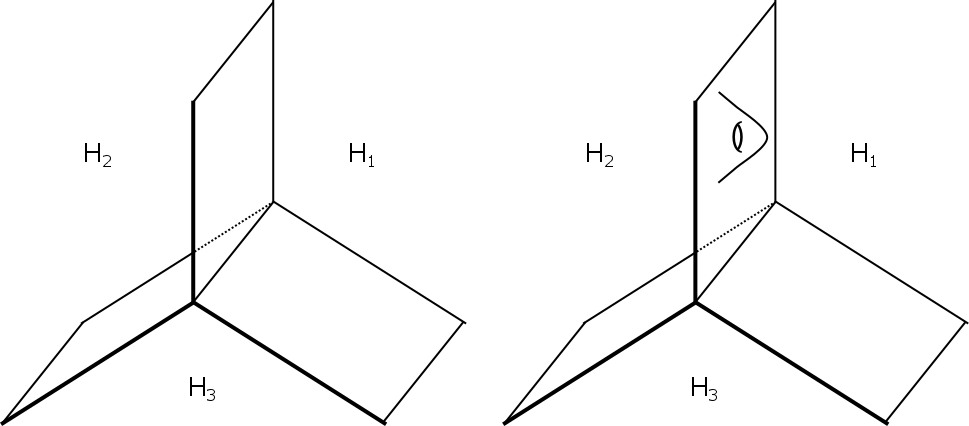}
\caption{A standard stabilization between $H_1$ and $H_2$ performed by adding to $H_2$ a regular neighborhood of an arc parallel into $S_{12}$, and removing the corresponding neighborhood from $H_1$.}
\label{standstab}
\end{figure}

Now we provide the proof of Theorem \ref{mainthm}, which we restate here for convenience.
\themainthm*
 Let  $(H_1,H_2,H_3;B)$ and $(H_1^*,H_2^*,H_3^*;B^*)$ be two trisections of a closed orientable 3-manifold $M$.  To avoid excessive notation, we use the same notation for both a trisection and the stabilizations that we obtain from that trisection.  The basic strategy is as follows:

\begin{enumerate}
\item Perform stabilizations until the quadruple $(h_1,h_2,h_3;b)$ is the same as $(h_1^*,h_2^*,h_3^*;b^*)$
\item Stabilize $H_1$ and $H_1^*$ until $H_2 \cup H_3$ and $H_2^* \cup H_3^*$ are handlebodies
\item Perform \emph{fake Heegaard stabilizations} until the Heegaard splittings $(H_1,H_2 \cup H_3;\del H_1)$ and $(H_1^*,H_2^* \cup H_3^*;\del H_1^*)$ are isotopic
\item Stabilize $H_3$ and $H_3^*$ until $S_{12}$ and $S_{12}^*$ are disks
\item Stabilize $H_2$ and $H_2^*$ until $S_{13}$ and $S_{13}^*$ are disks.
\end{enumerate}
After these steps, we will show that the resulting trisections are isotopic.  Since we will have started with two arbitrary trisections and stabilized both until we have isotopic trisections, the theorem follows.
\begin{proof}[Proof of Theorem \ref{mainthm}] \ 

\smallskip
\noindent\textit{Step 1.}\ \ 
First we stabilize so that the genera of the handlebodies in the two trisections are the same.  By applying Proposition \ref{balanceprop}, we may assume both trisections are balanced.  If $b > 2$ we stabilize the first trisection along an arc connecting two components of $B$, and then reapply Proposition $\ref{balanceprop}$.  Do the same for the second trisection if $b^* > 2$.  Then we have an $(h;b)$ and an $(h^*;b^*)$ balanced trisection where both $b,b^*$ are either 1 or 2.  If $h < h^*$, perform any stabilization on the first trisection, and then reapply Proposition \ref{balanceprop}, and repeat until $h = h^*$.  Do the same to the second trisection if $h^* < h$.  So we may assume that $h = h^*$ and, by the proof of Proposition \ref{balanceprop}, $b$ and $b^*$ must still be $\le 2$.  By Remark \ref{parityremark}, $b$ and $h$ must have opposite parities, so we see that $(h;b)$ is the same as $(h^*;b^*)$ as desired.  In future steps we perform stabilizations equally to both trisections so as to retain the property that both trisections have the same tuple $(h_1,h_2,h_3;b)$.

\smallskip
\noindent\textit{Step 2.}\ \ 
Choose a maximal nonseparating set of $h$ properly embedded arcs in $S_{23}$. Stabilizing along all arcs of this set results in a trisection where $S_{23}$ is a disk.  This means the complement of $H_1$ is the union of two handlebodies $H_2 \cup H_3$ glued along a disk in their boundaries, so it, too, is a handlebody.  Do the same thing to the other trisection.  Since we began this step with a balanced trisection with $h > 0$, $S_{23}$ was not a disk, so at least one stabilization was required in this step.  By Remark \ref{fakeheegremark}, both trisections now satisfy the necessary conditions to apply fake Heegaard stabilizations.

\smallskip
\noindent\textit{Step 3.}\ \ 
We now have that $(H_1,H_2 \cup H_3;\del H_1)$ and $(H_1^*,H_2^* \cup H_3^*;\del H_1^*)$ are Heegaard splittings of $M$.  By the Reidemeister-Singer theorem \cite{Reidemeister1}\cite{Singer1}, there exists a common Heegaard stabilization of these two Heegaard splittings.  The fake Heegaard stabilization operation affects the Heegaard splitting $(H_1,H_2 \cup H_3;\del H_1)$ just as Heegaard stabilization does.  Therefore, by repeatedly performing fake Heegaard stabilizations to both trisections we may assume that $(H_1,H_2 \cup H_3;\del H_1)$ and $(H_1^*,H_2^* \cup H_3^*;\del H_1^*)$ represent isotopic Heegaard splittings.  In particular, $H_1$ and $H_1^*$ are isotopic in $M$.

\smallskip
\noindent\textit{Step 4.}\ \ 
Choose a maximal set of nonseparating arcs properly embedded in $S_{12}$ and stabilize $H_3$ along all of them.  Do the same for the other trisection.  Since no stabilizations are performed on $H_1$ or $H_1^*$, $H_1$ and $H_1^*$ are still isotopic in $M$.  After doing this, $S_{12}$ and $S_{12}^*$ are disks.

\smallskip
\noindent\textit{Step 5.}\ \ 
Up to isotopy, we may now assume that $H_1 = H_1^*$ and $S_{12} = S_{12}^*$.  Since $S_ {12}$ is a disk, $H_1 \cup H_2$ can be obtained by attaching some 1-handles to $H_1$, with all attaching points occuring on $S_{12}$.  We show that if we were allowed to slide the ends of these handles along loops in $\del H_1$, we could arrange for them to be a set of small arcs each parallel $\mathrm{rel}~ \del$ into $S_{12}$.  Note that sliding the ends around freely will not necessarily correspond to an isotopy of the trisection because the ends may need to slide across $S_{13}$ to trivialize the handles.  See Figure \ref{h2as1handles}.  

To see that this is true, we define a Heegaard splitting of $H_2 \cup H_3$.  Let $W$ be the union of $H_2$ with a regular neighborhood of $\del(H_2 \cup H_3)$, so $V$ is a compression body.  Let $W$ be the complement of $V$, so $W$ is a slightly shrunken version of $H_3$, and is a handlebody. Then $V \cup W$ is a Heegaard splitting of $H_2 \cup H_3$ as desired.  We now use the fact that Heegaard splittings of handlebodies are standard, which follows from \cite{CassonGordon1}, \cite{Waldhausen1} and induction on genus.   Since $H_2 \cup H_3$ is a handlebody, the Heegaard splitting must be a stabilization of the standard one.  Therefore, $W$ must topologically be the union of $\del(H_2 \cup H_3)$ and some number of trivial 1-handles.  These 1-handles are therefore simultaneously parallel into $\del(H_2 \cup H_3)$.
\begin{figure}[ht]
\centering
\includegraphics[scale=.3]{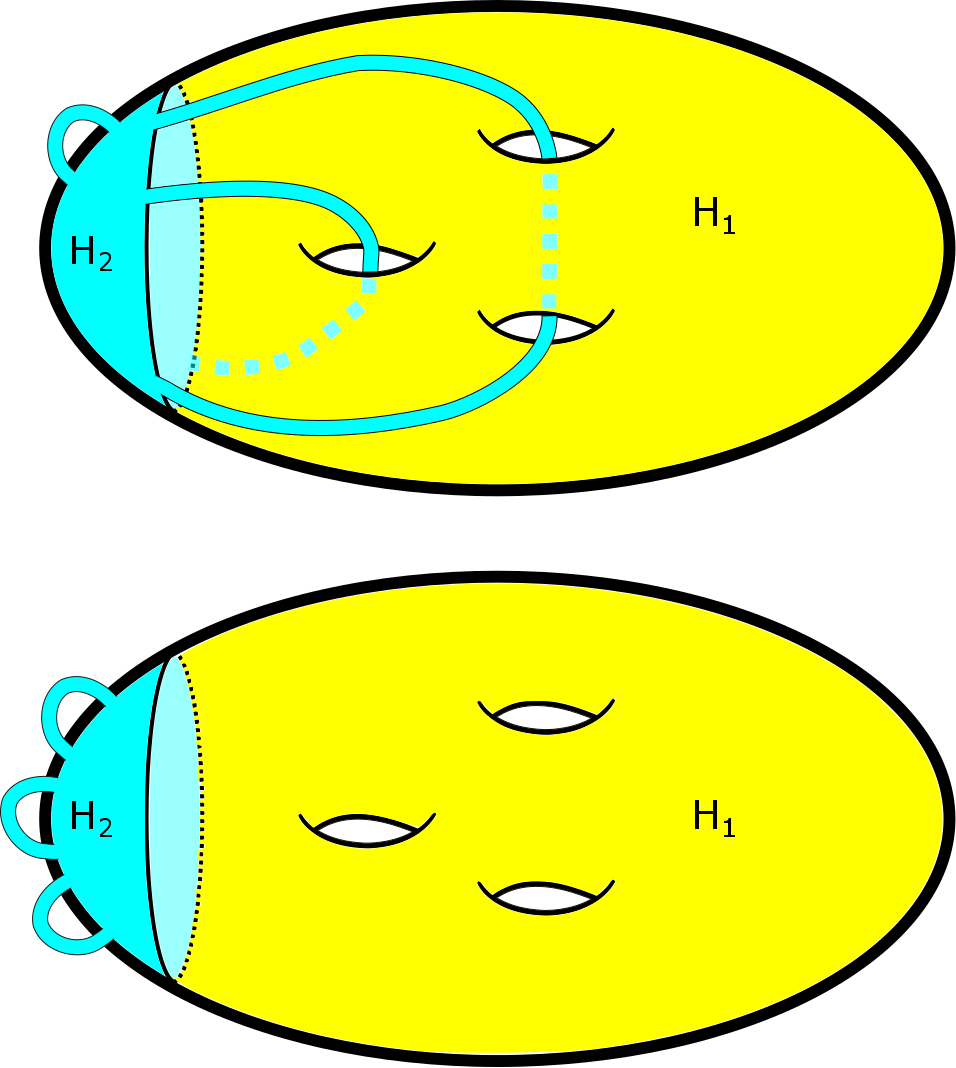}
\caption{After step 4, $H_2$ looks like a set of 1-handles attached to the thickened disk $N(S_{12})$.  The set of such 1-handles is simultaneously isotopic into $\del H_1$, so it is possible to slide the ends around  on $\del H_1$ to get to the lower picture where the set of handles is parallel into $S_{12}$.  However, performing such slides might require sliding the ends of the handles across $S_{13}$, which does not correspond to an isotopy of the trisection.}
\label{h2as1handles}
\end{figure}

To allow us to slide the ends of these 1-handles freely, stabilize $H_2$ as much as possible until $S_{13}$ is a disk.  After these stabilizations, $H_2$ consists of $N(\del H_1 - \{\mathrm{disk}\}) \cup \{\mathrm{trivial}$ 1-handles\}.  If we have done the same thing to the other trisection, we now know that there is an isotopy taking $H_1$ to $H_1^*$ and $H_2$ to $H_2^*$.  Note that if we hadn't performed step 1, it would be possible that $H_2$ had fewer or more of the trivial 1-handles than $H_2^*$.  The isotopy must also necessarily take $H_3$ to $H_3^*$, so the trisections are in fact isotopic.  Therefore, we have constructed a common stabilization of both initial trisections.  This concludes the proof. 

\end{proof}

\section{Trisections of $S^3$}

Recall that any Heegaard splitting of $S^3$ is a stabilization of the standard splitting into two balls \cite{Waldhausen1}.  One might hope for a similar result for trisections.  Since the genus 0 trisection cannot be stabilized, the simplest form of such a statement can be immediately ruled out.  However, one might still hope that there exists some finite list of low genus trisections such that any trisection of $S^3$ is obtained by stabilizing something in the list.  This turns out this too is impossible.  Specifically, 
\begin{prop}
There exists an infinite class of $(1,2,2;2)$ trisections of $S^3$ that are not stabilizations of any other trisection.  Therefore, there is no finite list of trisections of $S^3$ that can be stabilized to cover all possible trisections of $S^3$.
\end{prop}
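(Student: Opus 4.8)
The plan is to build the family by knotting the genus-one piece and reading off the core knot as the distinguishing invariant. For a suitable infinite collection of knots $K_n \subset S^3$ with tunnel number $t(K_n) \ge 2$, I would set $H_1 = N(K_n)$ and realize the exterior $E(K_n) = \overline{S^3 - H_1}$ as a union of two genus-two handlebodies $H_2 \cup_{S_{23}} H_3$ glued along a surface $S_{23}$ of genus $1$ with two boundary components, arranged so that $B = \partial S_{23}$ is two parallel essential curves on the torus $\partial N(K_n)$. These curves cut $\partial H_1$ into the two annuli $S_{12}$ and $S_{13}$, and the formula $h_i = g(S_{ij}) + g(S_{ik}) + b - 1$ from Section 3 then certifies that $(H_1,H_2,H_3;B)$ is a $(1,2,2;2)$ trisection. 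Because $S_{12}$ is an annulus rather than a disk, this construction does not force $H_1 \cup H_2$ to be a handlebody, so there is no a priori tunnel-number bound on $K_n$, and knots with $t(K_n)\ge 2$ are admissible.

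For distinctness I would use that any isotopy realizing an equivalence of trisections carries the solid torus $H_1$ to the corresponding solid torus, hence preserves the knot type of its core. Thus the isotopy class of $K_n$ is an invariant of $T_n$, and choosing the $K_n$ pairwise non-isotopic (there are infinitely many knots of tunnel number exactly two) makes the resulting trisections $T_n$ pairwise non-isotopic.

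The heart of the argument is unstabilizability, for which it suffices to rule out a single destabilization of each $T_n$, since $T_n$ is a nontrivial stabilization of some trisection exactly when it admits one destabilization. A destabilization lowers $\sum h_i$ from $5$ to $4$, so it must reduce the genus of $H_1$, $H_2$, or $H_3$. Reducing $H_1$ has the only numerically consistent target $(0,2,2;1)$, with $H_1'$ a ball; then $\overline{S^3 - H_1'}$ must be a ball, but it equals $E(K_n)$ with a $1$-handle attached and so has fundamental group $\pi_1(E(K_n)) * \Z \ne 1$, a contradiction — hence $H_1$ can never be destabilized, for any $K_n$. Reducing $H_2$ (symmetrically $H_3$) has target a $(1,1,2;1)$ trisection $T'$. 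There $S_{12}'$ is a disk, so $H_1' \cup H_2'$ is a genus-two handlebody whose complement is the genus-two handlebody $H_3'$; moreover the destabilization only glues a boundary-parallel finger onto $H_1$, so $\mathrm{core}(H_1')$ is still $K_n$. Then $H_2'$ is a $1$-handle attached to $N(K_n)$ whose core arc is an unknotting tunnel, forcing $t(K_n)\le 1$ and contradicting $t(K_n)\ge 2$. Thus each $T_n$ admits no destabilization.

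The step I expect to be the main obstacle is the construction itself: exhibiting, for infinitely many tunnel-number-two knots, an honest decomposition of $E(K_n)$ into two genus-two handlebodies meeting $\partial N(K_n)$ in the prescribed pair of annuli. I would produce this explicitly from a handlebody-knot or tunnel presentation and verify that, although $H_2$ and $H_3$ are abstractly standard handlebodies, they are assembled so that no compressing disk yields an expellable tube. Granting the construction, the final conclusion is immediate: if there were a finite list $\cL$ of trisections from which every trisection of $S^3$ arises by stabilization, then each unstabilizable $T_n$ would have to be isotopic to a member of $\cL$; since the $T_n$ represent infinitely many isotopy classes, $\cL$ must be infinite.
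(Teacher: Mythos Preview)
Your outline has the right shape, but there is a genuine error in the $H_1$ case and the acknowledged gap in the construction is in fact where all the content lies.

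\medskip
\noindent\textbf{The $H_1$ argument is wrong.} When you destabilize $H_1$ you compress the solid torus along a meridian disk $D$; the ball $N(D)$ removed from $H_1$ is then adjoined to $E(K_n)$ along the annulus $N(\partial D)\subset \partial E(K_n)$. That is a \emph{2-handle} attachment along the meridian, not a 1-handle, so
\[
\pi_1\bigl(\overline{S^3 - H_1'}\bigr)\;=\;\pi_1\bigl(E(K_n)\bigr)\big/\langle\!\langle \mu\rangle\!\rangle\;=\;1,
\]
and there is no contradiction. Worse, if the two curves of $B$ sit on $\partial N(K_n)$ with $|\lambda$-coordinate$|=1$ (integer framing), a meridian disk of $H_1$ \emph{is} a destabilizing disk and the $(1,2,2;2)$ trisection genuinely destabilizes to a $(0,2,2;1)$ trisection. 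So ruling out $H_1$-destabilization requires controlling the boundary slope of $S_{23}$, which your construction does not do.

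\medskip
\noindent\textbf{The construction is the crux.} You correctly flag it as the main obstacle, but it is not a technicality: one needs, for infinitely many knots, a twice-punctured torus in the exterior that (i) cuts $E(K)$ into two genus-two handlebodies and (ii) is rigid enough to forbid destabilizing disks on all three sides. The paper obtains this by quoting Koda--Ozawa, who produce an infinite family of knots whose exteriors contain an \emph{incompressible, boundary-incompressible} twice-punctured torus $\Sigma$ splitting $E(K)$ into two handlebodies, with $\partial\Sigma$ of non-integral slope. Boundary-incompressibility immediately kills any destabilizing disk in $H_2$ or $H_3$ (such a disk would be a $\partial$-compression of $\Sigma$), and the slope condition forces every meridian disk of $H_1$ to meet each component of $B$ at least twice, so no destabilizing disk exists in $H_1$ either. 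Your tunnel-number mechanism for $H_2,H_3$ is a nice alternative to $\partial$-incompressibility, but without an explicit source of examples it is only a plausibility argument, and without the slope control the $H_1$ side is simply open.
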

\begin{proof}
First, we must investigate how to detect stabilized trisections.  Since a stabilization is performed by adding a neighborhood of an arc in $S_{jk}$ to $H_i$, it follows that we can detect destabilizations as follows:

\begin{defn}
Suppose $M$ is a closed oriented 3-manifold with trisection $(H_1,H_2,H_3;B)$.  A \emph{destabilizing disk} $D$ is an essential nonseparating disk properly embedded in some $H_i$ such that $\del D$ consists of a nonseparating arc of $S_{ij}$ and a nonseparating arc of $S_{ik}$.
\end{defn}

If there exists a destabilizing disk $D$, we can pinch $H_j$ and $H_k$ together across $D$, performing a compression on $H_i$.  Since $D$ is nonseparating, $H_i$ is still a handlebody, and $H_j$ and $H_k$ are unaffected topologically, so we still have a decomposition of $M$ into three handlebodies.  Since the arcs of $\del D$ in $S_{ij}$ and $S_{ik}$ were nonseparating, the two surfaces are still connected after the compression.  $S_{jk}$ has been affected by attaching a band, so it too is still connected.  Thus, all pairwise intersections are still connected.  Therefore the result of this operation is indeed still a trisection, and we call this trisection a \emph{destabilization} of the original trisection.   This destabilization operation is the reverse of a stabilizatiom.

Now we can demonstrate the class of examples.  Koda and Ozawa describe a class of knots whose exteriors contain an incompressible, boundary incompressible twice punctured genus-1 surface 
$\Sigma$ cutting the exterior into two handlebodies \cite{kodaozawa1}.  This surface intersects $N(K)$ in two toroidal curves with nonzero rational slope on $\del N(K)$.  Let $H_1$ be $\overline{N(K)}$ and let $H_2$ and $H_3$ be the two genus-2 handlebodies resulting from cutting the exterior of $K$ along $\Sigma$.  Let $B$ be $\Sigma \cap H_1$.  Then $(H_1,H_2,H_3;B)$ is a trisection of $S^3$.  Since the components of $B$ are toroidal curves on $\del H_1$, any disk properly embedded in $H_1$ must intersect each component of $B$ at least twice.  Therefore, no such disk can be a destabilizing disk.  Any destabilizing disk in $H_2$ or $H_3$ would contradict boundary incompressibility, so these also cannot exist.  It follows that this $(1,2,2;2)$-trisection is not a stabilization of any other trisection.  Since the class of knots provided by Koda and Ozawa is infinite, the proposition follows.
\end{proof}

\bibliographystyle{amsplain}
\bibliography{Trisections}{}

%

%

\end{document}